\documentclass[11pt]{article}
\usepackage{amssymb,amsmath,amsthm,amsfonts}
\usepackage{graphicx}
\usepackage{epsfig}

\textwidth=16.8cm \textheight=22.6cm \headheight=0cm
\topmargin=0cm

\oddsidemargin 0cm \headsep=-1.0cm \raggedbottom

\usepackage{amssymb,amsmath,amsthm,amsfonts,mathrsfs, bbm,dsfont}

\theoremstyle{plain}

\theoremstyle{definition}
\newtheorem{definition}{Definition}[section]

\newtheorem{remark}{Remark}[section]

\newtheorem{theorem}{Theorem}[section]
\newtheorem{lemma}{Lemma}[section]

\setcounter{equation}{0}

\numberwithin{equation}{section}

\linespread{1.1}

\begin{document}
\openup 1.0\jot
\title{\Large\bf $C^*$-index of observable algebra in the field algebra  determined by \\ a normal group \thanks{This work is supported
by National Science Foundation of China (10971011,11371222)} }
\author{ Xin Qiaoling, Jiang Lining \thanks{E-mail address: jianglining@bit.edu.cn}}
\date{}
\maketitle\begin{center}
\begin{minipage}{16cm}
{\small \it School of Mathematics and Statistics, Beijing Institute
of Technology, Beijing 100081, P. R. China}
\end{minipage}
\end{center}
\vspace{0.05cm}
\begin{center}
\begin{minipage}{16cm}
{\small {\bf Abstract}: Let $G$ be a finite group and $H$ a normal subgroup. $D(H;G)$ is the crossed product of $C(H)$ and ${\Bbb C}G$ which is only a subalgebra of $D(G)$, the quantum double of $G$. One can construct a $C^*$-subalgebra ${\mathcal{F}}_{_H}$ of the field algebra $\mathcal{F}$ of $G$-spin models, such that ${\mathcal{F}}_{_H}$ is a $D(H;G)$-module algebra. The concrete construction of $D(H;G)$-invariant subalgebra ${\mathcal{A}}_{_{(H,G)}}$ of ${\mathcal{F}}_{_H}$ is given. By constructing the quasi-basis of conditional expectation $z_{_H}$ of
${\mathcal{F}}_{_H}$ onto ${\mathcal{A}}_{_{(H,G)}}$, the $C^*$-index of $z_{_H}$ is given.}
\endabstract
\end{minipage}\vspace{0.10cm}
\begin{minipage}{16cm}
{\bf  Keywords}: quantum double, $C^*$-index, quasi-basis, conditional expectation \\
Mathematics Subject Classification (2010): 46L05, 16S35
\end{minipage}
\end{center}
\begin{center} \vspace{0.01cm}
\end{center}








\section{Introduction }
Assume that $G$ is a finite group with a unit $e$. The $G$-valued spin configuration on the two-dimensional square lattices is the map $\sigma:{\Bbb Z}^2\rightarrow G$ with Euclidean action functional: $$S(\sigma)=\sum \limits_{(x,y)} f(\sigma_x^{-1}\sigma_y),$$ in which the summation runs over the nearest neighbour pairs in ${\Bbb Z}^2$ and $f:G\rightarrow {\Bbb R}$ is a function of the positive type. This kind of classical statistical systems or the corresponding quantum field theories are called $G$-spin models, see \cite{S.Dop,V.F.R.J, K.Szl}. Such models provide the simplest examples of lattice field theories exhibiting quantum symmetry. In general, $G$-spin models with an Abelian group $G$ are known to have a symmetry group $G\times \widehat{G}$, where $\widehat{G}$ is the group of characters of $G$, namely the Pontryagin dual of $G$. If $G$ is non-Abelian, the Pontryagin dual loses its meaning, and the models have a symmetry of a quantum double
 $D(G)$ ({\cite{K.A.Da,G.Mas}}). Here $D(G)$ is defined as the crossed product of $C(G)$, the algebra of complex functions on $G$, and group algebra ${\Bbb C}G$ with respect to the adjoint action of the latter on the former. Then $D(G)$ is a Hopf *-algebra of finite dimension (\cite{P.Ban,C.Kas,F.Nil}). Also as in the traditional quantum field theory, one can define a field algebra ${\mathcal{F}}$ associated with this model (\cite{L.N.Ji}). There is a natural action $\gamma$ of $D(G)$ on ${\mathcal{F}}$ such that ${\mathcal{F}}$ is a $D(G)$-module algebra with respect to the map $\gamma$. Namely, there is a bilinear map $\gamma: D(G)\times{\mathcal{F}}\rightarrow{\mathcal{F}}$ satisfying: $\forall \ a, b\in D(G), F_1, F_2, F\in{\mathcal{F}}$,
\begin{eqnarray*}
    \begin{array}{rcl}
(ab)(F)&=&a(b(F)),\\
a(F_1F_2)&=&\sum\limits_{(a)}a_{(1)}(F_1)a_{(2)}(F_2),\\
a(F^*)&=&(S(a^*)(F))^*.
 \end{array}
\end{eqnarray*}
Here and from now on, by $a(F)$ we always denote $\gamma(a\times F)$ in $\mathcal{F}$. Under the action of $\gamma$ on ${\mathcal{F}}$, the observable algebra ${\mathcal{A}}_{_{(G,G)}}$ as the $D(G)$-invariant subalgebra of $\mathcal{F}$ is obtained. And there exists a duality between ${\mathcal{A}}_G$ and $D(G)$, i.e., there is a unique $C^*$-representation of $D(G)$ such that $D(G)$ and ${\mathcal{A}}_{(G,G)}$ are commutants with each other.

In \cite{Qiao}, we consider a more general situation. Let $H$ be a normal subgroup of $G$, then $D(H;G)$ is defined as the crossed product of $C(H)$ and ${\Bbb C}G$ with respect to the adjoint action of the latter on the former. One can construct a $C^*$-subalgebra ${\mathcal{F}}_{_H}$ of the field algebra $\mathcal{F}$ of $G$-spin models, called the field algebra of $G$-spin models determined by $H$, such that ${\mathcal{F}}_{_H}$ is a $D(H;G)$-module algebra even though $D(H;G)$ is not a Hopf subalgebra of $D(G)$. Then the observable algebra ${\mathcal{A}}_{_{(H,G)}}$, which is the set of fixed points of ${\mathcal{F}}_{_H}$ under the action of $D(H;G)$ is given. Also there exists a duality between $D(H;G)$ and ${\mathcal{A}}_{(H,G)}$.

In this paper, we continue to discuss this model. In Section 2, we find algebraic generators for ${\mathcal{A}}_{_{(H,G)}}$ by means of discussing the local net structure to ${\mathcal{A}}_{_{(H,G)}}$. In Section 3, we construct a quasi-basis for
the conditional expectation $z_{_H}:{\mathcal{F}}_{_H}\rightarrow {\mathcal{A}}_{_{(H,G)}}$, and then obtain the corresponding $C^*$-index ${\mathrm{Index}}\ z_{_H}=|G||H|I$, where $|G|$ and $|H|$ denote the order of the group $G$ and $H$, respectively.

 Throughout this paper, all algebras are complex unital associative algebras. For more
details on Hopf algebras one can refer to the books of Sweedler \cite{M.E.Sw} and Abe \cite{E.Abe}. We shall adopt its notation, such as $S$, $\bigtriangleup$, $\varepsilon$ for the antipode, the comultiplication and the counit, respectively. Also we shall use the summation convention, which is standard in Hopf algebra theory:
\begin{eqnarray*}
    \begin{array}{rcl}
  \bigtriangleup(a)&=&\sum\limits_{(a)}a_{(1)}\otimes a_{(2)},\\
  \bigtriangleup^{(2)}(a)&=&\bigtriangleup\circ({\mathrm{id}}\otimes\bigtriangleup)(a)=\sum\limits_{(a)}a_{(1)}\otimes a_{(2)}\otimes a_{(3)},\\
  \bigtriangleup^{(n)}(a)&=&\bigtriangleup^{(n-1)}\circ({\mathrm{id}}\otimes\bigtriangleup)(a)=\sum\limits_{(a)}a_{(1)}\otimes a_{(2)}\otimes\cdots\otimes a_{(n+1)},
\end{array}
\end{eqnarray*}
where the second one holds since $\bigtriangleup\circ(\mathrm{id}\otimes\bigtriangleup)=\bigtriangleup\circ(\bigtriangleup\otimes {\mathrm{id}})$, and so on.
\section{The structure of the observable algebra in ${\mathcal{F}}_{_H}$}
Suppose that $H$ is a normal subgroup of $G$. In the previous paper  \cite{Qiao}, we defined a Hopf *-algebra $D(H;G)$ and then constructed a $C^*$-subalgebra ${\mathcal{F}}_{_H}$ in the field algebra $\mathcal{F}$ of $G$-spin models. Under the action $\gamma$ of $D(H;G)$ on it, ${\mathcal{F}}_{_H}$ becomes a $D(H;G)$-module algebra and the observable algebra $A_{_{(H;G)}}$ as the $D(H;G)$-invariant subalgebra of ${\mathcal{F}}_{_H}$ is given. This section will discuss a local net structure to $A_{_{(H;G)}}$, which can be achieved by finding algebraic generators for $A_{_{(H;G)}}$ with local commutation relations.  Let us begin with the following definition.

\begin{definition}$^{\cite{Qiao}}$
$D(H;G)$ is the crossed product of $C(H)$ and group algebra ${\Bbb C}G$, where $C(H)$ denotes the set of complex functions on $H$, with respect to the adjoint action of the latter on the former.
\end{definition}

Using the linear basis elements $(h,g)$ of $D(H;G)$, the multiplication can be written as:
$$(h_1,g_1)(h_2,g_2)=\delta_{h_1g_1,g_1h_2}(h_1,g_1g_2).$$

Clearly, $\sum\limits_{h\in H}(h,e)$ is the unit of $D(H;G)$. Also, the structure maps are defined as
\begin{eqnarray*}
 \begin{array}{rcll}
{(h,g)}^*&=&(g^{-1}hg,g^{-1}),&(\mbox{*-operation})\\
\bigtriangleup(h,g)&=&\sum \limits_{t\in H}(t,g)\otimes (t^{-1}h,g),&(\mbox{coproduct}) \\
\varepsilon(h,g)&=&\delta_{h,e},&(\mbox{counit})\\
S(h,g)&=&(g^{-1}h^{-1}g,g^{-1}),&(\mbox{antipode})
 \end{array}
\end{eqnarray*}
where $h\in H, g\in G$ and $\delta_{g,h}=\left\{\begin{array}{cc}
                          1, & {\mathrm{if}} \ \  g=h \\
                          0, & {\mathrm{if}} \ \ g\neq h
                        \end{array}\right.$.
One can prove $D(H;G)$ is a Hopf *-algebra with a unique element \begin{eqnarray*}
    \begin{array}{c}
    z_{_H}=\frac{1}{|G|}\sum\limits_{g\in G}(e,g),
          \end{array}
\end{eqnarray*}
 called a cointegral, satisfying
  $$az_{_H}=z_{_H}a=\varepsilon(a)z_{_H}, \ \ \forall a\in D(H;G),$$
  and $\varepsilon(z_{_H})=1$. As a result, $D(H;G)$ is a semisimple *-algebra of finite dimension. Consequently it can be a $C^*$-algebra in a natural way ({\cite{Qiao}}).

\begin{remark}
 (1) If $H$ is a subgroup of $G$, not a normal subgroup. One can prove there is not the adjoint action of ${\Bbb C}H$ on $C(G)$, and then $D(H;G)$ can not be defined.

(2) Different from the case of $D(G;H)$, which is the crossed product of $C(G)$ and ${\Bbb C}H$ with respect to the adjoint action of the latter on the former (\cite{L.N.Ji,L.N.Jia}), $D(H;G)$ is not a Hopf subalgebra of $D(G)$, even though it is a subalgebra of $D(G)$.

 (3) Also, the relation $S^2=\mathrm{id}$ holds in $D(H;G)$, which implies that $\forall a\in D(H;G),$
 \begin{eqnarray*}
    \begin{array}{c}
    \sum\limits_{(a)}S(a_{(2)})a_{(1)}=
    \sum\limits_{(a)}a_{(2)}S(a_{(1)})=\varepsilon(a)1_{D(H;G)}.
      \end{array}
\end{eqnarray*}
\end{remark}

As in the traditional case, one can define the local quantum field algebra associated with the model.

\begin{definition}$^{\cite{Qiao}}$
 ${\mathcal{F}}_{_{H,loc}}$ is an associative algebra with a unit $I$ generated by
 $\{\delta_g(x), \rho_h(l): g\in G, h\in H; x\in {\Bbb Z}, l\in {\Bbb Z}+\frac{1}{2}\}$ subject to
 \begin{eqnarray*}
    \begin{array}{rcl}
\sum\limits_{g\in G}\delta_g(x)&=&I=\rho_e(l),\\
\delta_{g_1}(x)\delta_{g_2}(x)&=&\delta_{g_1,g_2}\delta_{g_1}(x),\\
 \rho_{h_1}(l)\rho_{h_2}(l)&=&\rho_{h_1h_2}(l),\\
\delta_{g_1}(x)\delta_{g_2}(x')&=&\delta_{g_2}(x')\delta_{g_1}(x),\\
 \rho_h(l)\delta_g(x)&=& \left\{\begin{array}{cc}
                          \delta_{hg}(x)\rho_h(l), & l<x, \\
                          \delta_g(x)\rho_h(l), & l>x,
                        \end{array}\right.\\
 \rho_{h_1}(l)\rho_{h_2}(l')&=&
                          \rho_{h_2}(l')\rho_{{h_2}^{-1}h_1{h_2}}(l), \ \  l>l'.
  \end{array}
\end{eqnarray*}
for $x, x' \in {\Bbb Z};\ l, l'\in {\Bbb Z}+\frac{1}{2} \ {\mathrm{and}} \ h_1, h_2\in H, g_1,g_2\in G$.
In particular, if $H=G$, by ${\mathcal{F}}_{_{loc}}$ we denote ${\mathcal{F}}_{_{H,loc}}$.
\end{definition}

The *-operation is defined on the generators as $\delta_g^\ast(x)=\delta_g(x), \ \rho_h^\ast(l)=\rho_{h^{-1}}(l)$ and is extended antilinearly and antimultiplicatively to ${\mathcal{F}}_{_{H,loc}}$. In this way, ${\mathcal{F}}_{_{H,loc}}$ becomes a unital *-algebra.

For any finite subset $\Lambda\subseteq \frac{1}{2}\Bbb{Z}$, let
${\mathcal{F}}_{_{H}}(\Lambda)$ be the subalgebra of ${\mathcal{F}}_{_{H,loc}}$ generated by
 \begin{eqnarray*}
\{\delta_g(x), \rho_h(l):
 g\in G, h\in H, x, l\in\Lambda\}.
 \end{eqnarray*}
In particular, we consider an increasing sequence of intervals $\Lambda_n, n\in\Bbb{N}$, where
\begin{eqnarray*}
    \begin{array}{rcl}
    \Lambda_{2n}&=&\{s\in\frac{1}{2}\Bbb{Z}: -n+\frac{1}{2}\leq s\leq n\}\\
    \Lambda_{2n+1}&=&\{s\in\frac{1}{2}\Bbb{Z}: -n-\frac{1}{2}\leq s\leq n\}
\end{array}
\end{eqnarray*}
In \cite{K.Szl}, the authors have shown that ${\mathcal{F}}(\Lambda_n), n\in\Bbb{N}$ are full matrix algebras, they can be identified with $M_{|G|^n}$. Moreover, under the induced norm, ${\mathcal{F}}(\Lambda_n)$ are finite dimensional $C^*$-algebras.
Hence ${\mathcal{F}}_{_{H}}(\Lambda_n), n\in\Bbb{N}$ are subalgebras of full matrix algebras, and then they are finite dimensional $C^*$-algebras. The natural embeddings $\iota_n: {\mathcal{F}}_{_{H}}(\Lambda_n)\rightarrow{\mathcal{F}}_{_{H}}
(\Lambda_{n+1})$, that identify the $\delta$ and $\rho$ generators, are norm preserving.
Using the $C^*$-inductive limit (\cite{B.R.Li}), a $C^*$-algebra ${\mathcal{F}}_{_H}$ can be given by
\begin{eqnarray*}
    \begin{array}{c}
    {\mathcal{F}}_{_H}=\overline{\bigcup\limits_{n}
{\mathcal{F}}_{_H}(\Lambda_{n})},
 \end{array}
\end{eqnarray*}
called the field algebra of $G$-spin models determined by a normal subgroup $H$.\\

There is an action $\gamma$ of $D(H;G)$ on ${\mathcal{F}}_{_H}$ in the following. For $x\in {\Bbb Z}; \ l\in {\Bbb Z}+\frac{1}{2} \ {\mathrm{and}} \ h\in H, g\in G$, set
\begin{eqnarray*}
    \begin{array}{rcl}
(h,g)\delta_f(x)&=&\delta_{h,e}\delta_{gf}(x), \ \ \forall f\in G,\\
(h,g)\rho_t(l)&=&\delta_{h,gtg^{-1}}\rho_h(l), \ \ \forall t\in H.
 \end{array}
\end{eqnarray*}
Then the $\gamma$ can be extended continuously to an action of  $D(H;G)$ on ${\mathcal{F}}_{_H}$, such that ${\mathcal{F}}_{_H}$ is a $D(H;G)$-module algebra with respect to the $\gamma$. Namely, the $\gamma$ satisfies the following relations:
\begin{eqnarray*}
    \begin{array}{rcl}
(ab)(F)&=&a(b(F)),\\
a(F_1F_2)&=&\sum\limits_{(a)}a_{(1)}(F_1)a_{(2)}(F_2),\\
a(F^*)&=&(S(a^*)(F))^*
 \end{array}
\end{eqnarray*}
for $ a, b\in D(H;G), \ F_1, F_2, F\in {\mathcal{F}}_{_H}$.

Set $${\mathcal{A}}_{_{(H,G)}}=\{F\in {\mathcal{F}}_{_H}: z_{_H}(F)=F\}.$$
Then ${\mathcal{A}}_{_{(H,G)}}$ is a subalgebra of ${\mathcal{F}}_{_H}$.

\begin{lemma}$^{\cite{Qiao}}$ $z_{_H}:{\mathcal{F}}_{_H}\rightarrow{\mathcal{A}}_{_{(H,G)}}$ satisfies the following conditions:

 $(1)$\ $z_{_H}(I)=I$ where $I$ is the unit of ${\mathcal{F}}_{_H}$;

 $(2)$\ (bimodular property) $\forall \ F_1, F_2\in{\mathcal{A}}_{_{(H,G)}}, \ F\in {\mathcal{F}}_{_H}$,
 $$z_{_H}(F_1FF_2)=F_1z_{_H}(F)F_2;$$

 $(3)$\ $z_{_H}$ is positive.
\end{lemma}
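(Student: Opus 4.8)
The plan is to treat $z_{_H}$ as the operator $F\mapsto\gamma(z_{_H}\times F)$ induced by the cointegral and to use only the three module-algebra axioms together with the cointegral identities $a z_{_H}=z_{_H}a=\varepsilon(a)z_{_H}$ and $\varepsilon(z_{_H})=1$. The preparatory step I would isolate first is the identification of ${\mathcal{A}}_{_{(H,G)}}$ with the fixed-point algebra $\{F\in{\mathcal{F}}_{_H}:a(F)=\varepsilon(a)F,\ \forall a\in D(H;G)\}$. Indeed, if $z_{_H}(F)=F$ then for every $a$ the first module-algebra axiom and the cointegral property give $a(F)=a(z_{_H}(F))=(az_{_H})(F)=\varepsilon(a)z_{_H}(F)=\varepsilon(a)F$, while the converse follows at once from $\varepsilon(z_{_H})=1$. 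This equivalence is what legitimizes the counit collapse used in part $(2)$.

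For $(1)$ I would compute $z_{_H}(I)$ directly from the concrete action. Writing $I=\rho_e(l)$ and using $(h,g)\rho_t(l)=\delta_{h,gtg^{-1}}\rho_h(l)$, one obtains $(e,g)(I)=\delta_{e,geg^{-1}}\rho_e(l)=I$ for every $g\in G$, whence $z_{_H}(I)=\frac{1}{|G|}\sum_{g\in G}(e,g)(I)=I$; equivalently this is the instance $z_{_H}(I)=\varepsilon(z_{_H})I$ of the module-algebra normalization.

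For the bimodular property $(2)$ the plan is to apply the second module-algebra axiom to $z_{_H}$. Writing $\bigtriangleup(z_{_H})=\sum_{(z_{_H})}(z_{_H})_{(1)}\otimes(z_{_H})_{(2)}$ and using that a fixed point $F_1$ satisfies $(z_{_H})_{(1)}(F_1)=\varepsilon((z_{_H})_{(1)})F_1$, I get
\[
z_{_H}(F_1F)=\sum_{(z_{_H})}(z_{_H})_{(1)}(F_1)(z_{_H})_{(2)}(F)=F_1\left(\sum_{(z_{_H})}\varepsilon((z_{_H})_{(1)})(z_{_H})_{(2)}\right)(F)=F_1z_{_H}(F),
\]
the last equality being the counit axiom $\sum\varepsilon(a_{(1)})a_{(2)}=a$. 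The symmetric computation with $F_2$ on the right uses $\sum a_{(1)}\varepsilon(a_{(2)})=a$ to give $z_{_H}(FF_2)=z_{_H}(F)F_2$; applying the left identity to $F_1$ and $FF_2$ and then the right identity yields $z_{_H}(F_1FF_2)=F_1z_{_H}(F)F_2$.

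For positivity $(3)$ I would show $z_{_H}(F^*F)\ge 0$ for every $F$, which suffices since the positive elements of the $C^*$-algebra ${\mathcal{F}}_{_H}$ are exactly those of the form $F^*F$. Using $\bigtriangleup(e,g)=\sum_{t\in H}(t,g)\otimes(t^{-1},g)$, the third axiom $a(F^*)=(S(a^*)(F))^*$, and the computation $S((t,g)^*)=(t^{-1},g)$ (from $(t,g)^*=(g^{-1}tg,g^{-1})$ and the antipode formula), one finds
\[
(e,g)(F^*F)=\sum_{t\in H}\big((t^{-1},g)(F)\big)^*(t^{-1},g)(F),
\]
a sum of elements of the form $x^*x$; summing over $g$ and dividing by $|G|$ exhibits $z_{_H}(F^*F)$ as a positive element. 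The only real care needed is the antipode/$*$ bookkeeping in this last step, and once $S((t,g)^*)=(t^{-1},g)$ is verified the positivity is immediate. I expect this identity, together with the verification that ${\mathcal{A}}_{_{(H,G)}}$ genuinely coincides with the fixed-point algebra, to be the only nontrivial points, the remainder being a formal consequence of the Hopf-algebra axioms.
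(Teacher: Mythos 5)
Your proof is correct; note that the paper itself states this lemma without proof, citing \cite{Qiao}, so there is no in-paper argument to compare against. Your route --- first identifying ${\mathcal{A}}_{_{(H,G)}}$ with the fixed-point algebra via the cointegral identity $az_{_H}=\varepsilon(a)z_{_H}$, then collapsing the coproduct legs with the counit axiom to get bimodularity, and finally using $S((t,g)^*)=(t^{-1},g)$ together with $\Delta(e,g)=\sum_{t\in H}(t,g)\otimes(t^{-1},g)$ to exhibit $z_{_H}(F^*F)$ as a sum of elements of the form $x^*x$ --- is the standard cointegral argument for such conditional expectations, and is precisely the kind of verification carried out in the cited reference.
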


In the following a linear map $\Gamma$ from a unital $C^*$-algebra $B$ onto its unital $C^*$-subalgebra $A$ with properties (1)-(3) in Lemma 2.1 is  called a conditional expectation. If $\Gamma$ is a conditional expectation from $B$ onto $A$, then $\Gamma$ is a projection of norm one (\cite{B.R.Li}). The conditional expectation $z_{_H}:{\mathcal{F}}_{_H}\rightarrow{\mathcal{A}}_{_{(H,G)}}$ will be addressed in the next section.

From the above lemma, one can prove ${\mathcal{A}}_{_{(H,G)}}$ is the $C^*$-subalgebra of ${\mathcal{F}}_{_H}$, called an observable algebra related to $H$ in the field algebra ${\mathcal{F}}$ of $G$-spin models. Moreover, if $H_1\subsetneq H_2$ with $H_i\triangleleft G$ for $i=1,2$, then ${\mathcal{A}}_{_{(H_1,G)}}\subsetneq{\mathcal{A}}_{_{(H_2,G)}}$, since
$\bigtriangleup_{H_1}^{(n)}(z_{H_1})
\leq\bigtriangleup_{H_2}^{(n)}(z_{H_2})$
as projections on
${{\mathcal{F}}_{H_1}}^{\otimes^{n+1}}$ for $n\in {\Bbb N}$.

In this section, we will give the concrete construction of ${\mathcal{A}}_{_{(H,G)}}$. In order to do this, for $g\in G$, $x\in\Bbb{Z}$, and $l\in\Bbb{Z}+\frac{1}{2}$, set
\begin{eqnarray*}
    \begin{array}{rcl}
v_g(x)&=&\sum\limits_{h\in G}\varrho_{hg^{-1}h^{-1}}(x-\frac{1}{2})\delta_h(x)
\varrho_{hgh^{-1}}(x+\frac{1}{2}),\\
w_g(l)&=&\sum\limits_{h\in G}\delta_h(l-\frac{1}{2})\delta_{hg}(l+\frac{1}{2}).
\end{array}
\end{eqnarray*}

\begin{lemma}
Let $\Lambda_{n-\frac{1}{2},m}\subseteq\frac{1}{2}\Bbb{Z}$ be a finite interval for $n,m\in\Bbb{Z}$. The $D(H;G)$-invariant subalgebra of
${\mathcal{F}}_{_H}(\Lambda_{n-\frac{1}{2},m})$ is generated by
\begin{eqnarray*}
    \left\{\omega_g(x), v_h(l):
 g\in G, h\in H, x,l\in\Lambda_{n,m-\frac{1}{2}}\right\}.
\end{eqnarray*}
That is
\begin{eqnarray*}
   z_{_H}\left({\mathcal{F}}_{_H}(\Lambda_{n-\frac{1}{2},m})\right)
   =\left\{\omega_g(x), v_h(l):
 g\in G, h\in H, x,l\in\Lambda_{n,m-\frac{1}{2}}\right\}.
\end{eqnarray*}
\end{lemma}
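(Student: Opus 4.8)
Write $\mathcal{C}$ for the subalgebra of $\mathcal{F}_{_H}(\Lambda_{n-\frac{1}{2},m})$ generated by the operators $v_g$ and $w_g$ introduced above (the $\omega$ of the statement being the operator $w$). Since $z_{_H}$ is a conditional expectation onto the algebra $\mathcal{A}_{_{(H,G)}}$, proving the asserted identity $z_{_H}(\mathcal{F}_{_H}(\Lambda_{n-\frac{1}{2},m}))=\mathcal{C}$ reduces to the two inclusions $\mathcal{C}\subseteq z_{_H}(\mathcal{F}_{_H}(\Lambda_{n-\frac{1}{2},m}))$ and $z_{_H}(\mathcal{F}_{_H}(\Lambda_{n-\frac{1}{2},m}))\subseteq\mathcal{C}$.

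For the first (easy) inclusion the plan is to verify that each generator is $z_{_H}$-fixed, hence lies in the image of $z_{_H}$; since that image is a subalgebra, all of $\mathcal{C}$ then lies inside it. Because $z_{_H}=\frac{1}{|G|}\sum_{c\in G}(e,c)$, it is enough to show $(e,c)(v_g(x))=v_g(x)$ and $(e,c)(w_g(l))=w_g(l)$ for every $c\in G$. I would compute both using $a(F_1\cdots F_k)=\sum a_{(1)}(F_1)\cdots a_{(k)}(F_k)$ together with the iterated coproduct $\Delta^{(k-1)}(e,c)=\sum_{s_1,\dots,s_{k-1}\in H}(s_1,c)\otimes(s_1^{-1}s_2,c)\otimes\cdots\otimes(s_{k-1}^{-1},c)$ and the elementary rules $(u,c)\delta_f=\delta_{u,e}\,\delta_{cf}$ and $(u,c)\rho_t=\delta_{u,ctc^{-1}}\,\rho_u$. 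For $w_g(l)$ only the term $s_1=e$ survives and the substitution $h\mapsto ch$ restores the defining sum; for $v_g(x)$ the two $\rho$-factors fix the interpolating labels, and the identity $(hg^{-1}h^{-1})(hgh^{-1})=e$ is exactly what makes $(e,c)$ act as the relabelling $h\mapsto ch$, leaving $v_g(x)$ unchanged.

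For the reverse inclusion I would first compute $z_{_H}$ on an arbitrary normal-ordered monomial $F=\rho_{a_0}(n-\frac{1}{2})\delta_{b_1}(n)\rho_{a_1}(n+\frac{1}{2})\cdots\delta_{b_p}(m)$, with $a_i\in H$ and $b_j\in G$. Running the same interpolation argument along the whole word shows that $(e,c)(F)=0$ unless the ordered product $a_0a_1\cdots a_{p-1}$ equals $e$, and that when this balance condition holds $(e,c)$ acts as the global gauge transformation $\delta_b\mapsto\delta_{cb}$, $\rho_a\mapsto\rho_{cac^{-1}}$. Hence $z_{_H}(F)=0$ for an unbalanced word, while for a balanced one $z_{_H}(F)=\frac{1}{|G|}\sum_{c\in G}(\text{gauge transform of }F)$. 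It then remains to recognize each such gauge average as an element of $\mathcal{C}$, which I would prove by induction on the number $p$ of integer sites, peeling off the rightmost pair $\rho_{a_{p-1}}(m-\frac{1}{2})\delta_{b_p}(m)$: moving the boundary $\rho$ inward by the $\rho$-$\delta$ relation and absorbing it into a factor $w_{b_{p-1}^{-1}b_p}(m-\frac{1}{2})$ and a suitable $v$-factor at the site $m-1$ should reduce the gauge average of the length-$p$ word to $\frac{1}{|G|}$ times that of a balanced length-$(p-1)$ word, which lies in $\mathcal{C}$ by induction; the base case is $z_{_H}(\delta_{b_1}(n))=\frac{1}{|G|}I$. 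As a check, the case $p=2$ gives directly $z_{_H}(F)=\frac{1}{|G|}\,v_{(b_1^{-1}a_0b_1)^{-1}}(n)\,w_{b_1^{-1}b_2}(n+\frac{1}{2})$.

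The principal obstacle is this inductive factorization. One must pin down the parameters of the extracted $v$- and $w$-factors (they are conjugates and quotients of the $a_i$ and $b_j$ forced by the braiding relation $\rho_{h_1}(l)\rho_{h_2}(l')=\rho_{h_2}(l')\rho_{h_2^{-1}h_1h_2}(l)$), verify that the several independent summations carried by those factors collapse against the $\delta$-orthogonality relations to reproduce exactly the single residual gauge sum $\frac{1}{|G|}\sum_c$, and check that removing the rightmost letter leaves a balanced word so that the induction closes; the balance condition $a_0\cdots a_{p-1}=e$ is precisely what guarantees this, each $v$-factor contributing a conjugate-balanced pair of $\rho$'s. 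A less computational alternative would be to replace the factorization by the dimension count $\dim z_{_H}(\mathcal{F}_{_H}(\Lambda_{n-\frac{1}{2},m}))=\dim\mathcal{C}$ together with the already-proved inclusion $\mathcal{C}\subseteq z_{_H}(\mathcal{F}_{_H}(\Lambda_{n-\frac{1}{2},m}))$, but this only shifts the difficulty to computing both dimensions from the local commutation relations.
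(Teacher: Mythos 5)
Your proposal is correct and is essentially the paper's own argument: both compute $z_{_H}$ on spanning monomials by tracking the forced coproduct labels (yielding the balance condition on the $\rho$-labels and the gauge-average formula, zero otherwise) and then identify the resulting averages with products of the $v$- and $w$-generators; your $p=2$ formula agrees with the paper's after commuting the $v$-factor past the $w$-factor. The only differences are organizational: the paper establishes the identification via a single closed-form expansion of $w_{x_1}(\frac{3}{2})\cdots w_{x_{m-1}}(m-\frac{1}{2})v_{y_1}(1)\cdots v_{y_{m-1}}(m-1)$ with parameters matched directly, rather than your inductive peeling of the rightmost pair, and it leaves the easy inclusion (invariance of the generators under $z_{_H}$) implicit.
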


\begin{proof}
 We know that ${\mathcal{F}}_{_H}(\Lambda_{{\frac{1}{2}},m})$ is a $C^*$-subalgebra of ${\mathcal{F}}_{_{H,loc}}$, generated by
\begin{eqnarray*}
    \left\{\delta_g(x), \varrho_h(l):
 g\in G, h\in H, x,l\in\Lambda_{{\frac{1}{2}},m}\right\}.
\end{eqnarray*}
Notice that for $h_i\in H, \ \ i=1,2,\cdots m$ with $h_1h_2\cdots h_m=e$,
 \begin{eqnarray*}
    \begin{array}{rcl}
&&z_{_H}\left(\delta_{g_1}(1)\delta_{g_2}(2)\cdots\delta_{g_m}(m)
\varrho_{h_1}(\frac{1}{2})\varrho_{h_2}(\frac{3}{2})
\cdots\varrho_{h_m}(m-\frac{1}{2})\right)\\
&=&\frac{1}{|G|}\sum\limits_{f\in G}(f,e)\left(\delta_{g_1}(1)\delta_{g_2}(2)\cdots\delta_{g_m}(m)
\varrho_{h_1}(\frac{1}{2})\varrho_{h_2}(\frac{3}{2})
\cdots\varrho_{h_m}(m-\frac{1}{2})\right)\\
&=&\frac{1}{|G|}\sum\limits_{f\in G}\sum\limits_{t_i\in H}
\Big((t_1,f)\delta_{g_1}(1)(t_1t_2^{-1},f)\delta_{g_2}(2)\cdots
(t_{m-1}t_{m}^{-1},f)\delta_{g_m}(m)\\
&&(t_{m}t_{m+1}^{-1},f)\varrho_{h_1}(\frac{1}{2})
(t_{m+1}t_{m+2}^{-1},f)\varrho_{h_2}(\frac{3}{2})
\cdots(t_{2m-1}^{-1},f)\varrho_{h_m}(m-\frac{1}{2})\Big)\\
&=&\frac{1}{|G|}\sum\limits_{f\in G}
\Big(\delta_{fg_1}(1)\delta_{fg_2}(2)\cdots\delta_{fg_m}(m)
\varrho_{fh_1f^{-1}}(\frac{1}{2})
\varrho_{fh_2f^{-1}}(\frac{3}{2})
\cdots\varrho_{fh_{m-1}^{-1}h_{m-2}^{-1}\cdots h_{1}^{-1}f^{-1}}(m-\frac{1}{2})\Big)\\
&=&\frac{1}{|G|}\sum\limits_{s\in G}
\Big(\delta_{s}(1)\delta_{sg_1^{-1}g_2}(2)\cdots
\delta_{sg_1^{-1}g_m}(m)
\varrho_{sg_1^{-1}h_1g_1s^{-1}}(\frac{1}{2})
\varrho_{sg_1^{-1}h_2g_1s^{-1}}(\frac{3}{2})
\cdots\\
&&\varrho_{sg_1^{-1}h_{m-1}^{-1}h_{m-2}^{-1}\cdots
 h_{1}^{-1}g_1s^{-1}}(m-\frac{1}{2})\Big),
      \end{array}
\end{eqnarray*}
which together with the following equation
 \begin{eqnarray*}
    \begin{array}{rcl}
&&w_{x_1}(\frac{3}{2})w_{x_2}(\frac{5}{2})
\cdots w_{x_{m-1}}(m-\frac{1}{2})v_{y_1}(1)
v_{y_2}(2)\cdots v_{y_{m-1}}(m-1)\\
&=&\sum\limits_{s_i\in G}\sum\limits_{t_i\in G}
\delta_{s_1}(1)\delta_{s_1x_1}(2)
\delta_{s_2}(2)\delta_{s_2x_2}(3)\cdots\delta_{s_{m-1}}(m-1)
\delta_{s_{m-1}x_{m-1}}(m)\rho_{t_1y_1^{-1}t_1^{-1}}(\frac{1}{2})
\delta_{t_1}(1)\rho_{t_1y_1t_1^{-1}}(\frac{3}{2})\\
&&\rho_{t_2y_2^{-1}t_2^{-1}}(\frac{3}{2})\delta_{t_2}(2)
\rho_{t_2y_2t_2^{-1}}(\frac{5}{2})\cdots
\rho_{t_{m-1}y_{m-1}^{-1}t_{m-1}^{-1}}(m-\frac{3}{2})
\delta_{t_{m-1}}({m-1})
\rho_{t_{m-1}y_{m-1}t_{m-1}^{-1}}(m-\frac{1}{2})\\
&=&\sum\limits_{s_i\in G}\sum\limits_{t_i\in G}
\delta_{s_1}(1)\delta _{t_1y_1^{-1}}(1)\delta_{s_1x_1}(2)
\delta_{s_2}(2)\delta_{t_2y_2^{-1}}(1)\cdots
\delta_{s_{m-2}x_{m-2}}(m-1)\delta_{s_{m-1}}({m-1})\\
&&
\delta _{t_{m-1}y_{m-1}^{-1}}(m-1)\delta_{s_{m-1}x_{m-1}}(m)
\rho_{t_1y_1^{-1}t_1^{-1}}(\frac{1}{2})
\rho_{t_1y_1t_1^{-1}t_2y_2^{-1}t_2^{-1}}(\frac{3}{2})
\rho_{t_2y_2t_2^{-1}t_3y_3^{-1}t_3^{-1}}(\frac{5}{2})\cdots\\
&&\rho_{t_{m-2}y_{m-2}^{-1}t_{m-2}^{-1}y_{m-1}^{-1}t_{m-1}^{-1}}
(m-\frac{3}{2})
\rho_{t_{m-1}y_{m-1}t_{m-1}^{-1}}(m-\frac{1}{2})\\
&=&\sum\limits_{s\in G}\delta_{s}(1)\delta_{sx_1}(2)\delta_{sx_1x_2}(3)
\cdots\delta_{sx_1x_2\cdots x_{m-1}}(m)
\rho_{sy_1^{-1}s^{-1}}(\frac{1}{2})
\rho_{sy_1x_1y_2^{-1}x_1^{-1}s^{-1}}(\frac{3}{2})
\rho_{sx_1y_2x_2y_3^{-1}x_2^{-1}x_1^{-1}s^{-1}}(\frac{5}{2})\\
&&\cdots\rho_{sx_1y_2x_2\cdots x_{m-3}y_{m-2}x_{m-2}y_{m-1}^{-1}x_{m-2}^{-1}
\cdots x_1^{-1}s^{-1}}(m-\frac{3}{2})
\rho_{sx_1y_2x_2\cdots x_{m-3}x_{m-2}y_{m-1}x_{m-2}^{-1}
\cdots x_1^{-1}s^{-1}}(m-\frac{1}{2})
      \end{array}
\end{eqnarray*}
yields that
 \begin{eqnarray*}
    \begin{array}{rcl}
&&z_{_H}\left(\delta_{g_1}(1)\delta_{g_2}(2)\cdots\delta_{g_m}(m)
\varrho_{h_1}(\frac{1}{2})\varrho_{h_2}(\frac{3}{2})
\cdots\varrho_{h_m}(m-\frac{1}{2})\right)\\
&=&\frac{1}{|G|}w_{{g_1}^{-1}{g_2}}(\frac{3}{2})
w_{{g_2}^{-1}{g_3}}(\frac{5}{2})
\cdots w_{g_{m-1}^{-1}{g_m}}(m-\frac{1}{2})v_{g_1^{-1}h_1^{-1}g_1}(1)
v_{g_2^{-1}h_2^{-1}h_1^{-1}g_2}(2)\\
&&v_{g_3^{-1}h_3^{-1}h_2^{-1}h_1^{-1}g_3}(3)\cdots
v_{g_{m-1}^{-1}h_{m-1}^{-1}h_{m-1}^{-1}\cdots h_1^{-1}g_{m-1}}(m-1).
      \end{array}
\end{eqnarray*}
Hence, $z_{_H}\left({\mathcal{F}}_{_H}(\Lambda_{{\frac{1}{2}},m})\right)$ is a $C^*$-subalgebra of ${\mathcal{A}}_{_{(H,G)}}$, generated by
\begin{eqnarray*}
    \left\{\omega_g(x), v_h(l):
 g\in G, h\in H, x,l\in\Lambda_{1,m-\frac{1}{2}}\right\}.
\end{eqnarray*}
By induction, one can show $z_{_H}\left({\mathcal{F}}_{_H}(\Lambda_{{n-\frac{1}{2}},m})\right)$
 is  generated by
\begin{eqnarray*}
    \left\{\omega_g(x), v_h(l):
 g\in G, h\in H, x,l\in\Lambda_{n,m-\frac{1}{2}}\right\}.
\end{eqnarray*}
\end{proof}

\begin{remark}For $\Lambda\subseteq\frac{1}{2}\Bbb{Z}$, let
 \begin{eqnarray*}
    \begin{array}{c}
    {\mathcal{A}}_{_H}(\Lambda)=\langle v_h(x), w_g(l): h\in H, g\in G, x,l\in\Lambda\rangle.
          \end{array}
\end{eqnarray*}
Lemma 2.2 together with Lemma 2.1 implies that $z_{_H}:{\mathcal{F}}_{_H}(\Lambda_{{n-\frac{1}{2}},m})
\rightarrow{\mathcal{A}}_{_H}(\Lambda_{n,m-\frac{1}{2}})$ is a conditional expectation.
\end{remark}

\begin{theorem}
The observable algebra ${\mathcal{A}}_{_{(H,G)}}$ is the $C^*$-algebra given by the $C^*$-inductive limit
 \begin{eqnarray*}
    \begin{array}{c}
    {\mathcal{A}}_{_{(H,G)}}=
\overline{\bigcup\limits_{\Lambda}{\mathcal{A}}_{_H}
(\Lambda)}.
   \end{array}
\end{eqnarray*}
\end{theorem}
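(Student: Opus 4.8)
The plan is to prove the set equality by establishing the two inclusions separately, exploiting that $z_{_H}$ is a norm-one idempotent whose range is exactly $\mathcal{A}_{_{(H,G)}}$, together with the local range computation of Lemma 2.2; the $C^*$-inductive-limit structure then follows from the finite-dimensionality of the local pieces.

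First I would show $\overline{\bigcup_{\Lambda}\mathcal{A}_{_H}(\Lambda)}\subseteq\mathcal{A}_{_{(H,G)}}$. By Lemma 2.1 the map $z_{_H}$ is a conditional expectation, hence a projection of norm one; being idempotent, its range coincides with its fixed-point set, which is by definition $\mathcal{A}_{_{(H,G)}}$. Lemma 2.2 identifies the local range as $z_{_H}\big(\mathcal{F}_{_H}(\Lambda_{n-\frac{1}{2},m})\big)=\mathcal{A}_{_H}(\Lambda_{n,m-\frac{1}{2}})$, so every generator $v_h(x),\,w_g(l)$ lies in the range of $z_{_H}$, i.e.\ in $\mathcal{A}_{_{(H,G)}}$; consequently $\mathcal{A}_{_H}(\Lambda)\subseteq\mathcal{A}_{_{(H,G)}}$ for each finite $\Lambda$. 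Since $\mathcal{A}_{_{(H,G)}}=\ker(I-z_{_H})$ is the kernel of a continuous map, it is norm-closed, and passing to the closure of the union preserves the inclusion.

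For the reverse inclusion I would take $F\in\mathcal{A}_{_{(H,G)}}$, so that $z_{_H}(F)=F$. Because $\mathcal{F}_{_H}=\overline{\bigcup_n\mathcal{F}_{_H}(\Lambda_n)}$, I can choose a sequence $F_k\in\mathcal{F}_{_H}(\Lambda_{n_k})$ with $\|F_k-F\|\to 0$. Applying $z_{_H}$ and using $\|z_{_H}\|\le 1$ yields $\|z_{_H}(F_k)-F\|=\|z_{_H}(F_k-F)\|\le\|F_k-F\|\to 0$, while Lemma 2.2 places each $z_{_H}(F_k)$ inside some $\mathcal{A}_{_H}(\Lambda)$. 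Hence $F$ is a norm limit of elements of $\bigcup_{\Lambda}\mathcal{A}_{_H}(\Lambda)$, giving $F\in\overline{\bigcup_{\Lambda}\mathcal{A}_{_H}(\Lambda)}$. Combining the two inclusions gives the set equality, and since each $\mathcal{A}_{_H}(\Lambda_n)$ is a finite-dimensional $C^*$-subalgebra of the matrix algebra $\mathcal{F}_{_H}(\Lambda_{n-\frac{1}{2},m})$ (being the range of the conditional expectation of Remark 2.2) with isometric connecting inclusions, the closure is precisely the $C^*$-inductive limit.

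The step I expect to demand the most care is the bookkeeping of the half-integer index shifts. The bilocal generators $v_g(x)$ and $w_g(l)$ occupy the adjacent sites $x\pm\frac{1}{2}$ and $l\pm\frac{1}{2}$, so the filtration $\Lambda_{n-\frac{1}{2},m}$ of $\mathcal{F}_{_H}$ is carried by $z_{_H}$ to the shifted filtration $\Lambda_{n,m-\frac{1}{2}}$ of observables. I would verify that these shifted intervals remain cofinal among the finite subintervals of $\frac{1}{2}\mathbb{Z}$, so that letting $n,m\to\infty$ genuinely exhausts $\mathcal{A}_{_{(H,G)}}$ and makes the two inductive systems compatible; it is exactly this cofinality that allows the purely local identity of Lemma 2.2 to be promoted to the global equality.
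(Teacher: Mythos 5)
Your proposal is correct and follows essentially the same route as the paper: the paper's proof is exactly your ``reverse inclusion'' argument (approximate $F\in\mathcal{A}_{_{(H,G)}}$ by a local element $B$, then use $z_{_H}(F)=F$ and $\|z_{_H}\|\leq 1$ to conclude $z_{_H}(B)\in\mathcal{A}_{_H}(\Lambda_{n,m-\frac{1}{2}})$ approximates $F$), while your first inclusion is the part the paper leaves implicit, since Lemma 2.2 already states that $z_{_H}\bigl(\mathcal{F}_{_H}(\Lambda_{n-\frac{1}{2},m})\bigr)$ is a $C^*$-subalgebra of $\mathcal{A}_{_{(H,G)}}$. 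Your extra care about idempotence, closedness of the fixed-point set, and cofinality of the shifted intervals is sound but amounts to making the paper's tacit steps explicit rather than a different method.
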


\begin{proof}
If $A\in {\mathcal{A}}_{_{(H,G)}}$ and $\varepsilon> 0$, then from Lemma 2.2 and the continuity of the projection $z_{_H}$, we know $A=z_{_H}(A)$ and there is $B\in {\mathcal{F}}_{_H}(\Lambda_{n-\frac{1}{2},m})$ with $\|A-B\|<\varepsilon$, which implies that
$$\|A-z_{_H}(B)\|=\|z_{_H}(A-B)\|\leq\|A-B\|<\varepsilon,$$
and $z_{_H}(B)\in {\mathcal{A}}_{_H}(\Lambda_{n,m-\frac{1}{2}})$.
\end{proof}

\section{$C^*$-index}
This section will give the $C^*$-index of conditional expectation $z_{_H}:{\mathcal{F}}_{_H}\rightarrow{\mathcal{A}}_{_{(H,G)}}$, where $H$ is a normal subgroup of $G$ with $[G:H]=k$ and $t_1=e, t_2,\cdots, t_k$ is a left coset representation of $H$ in $G$, namely
$G=\bigcup\limits_{i=1}^kt_iH$ and $i\neq j$ induces that $t_iH\cap t_jH=\emptyset$, where $e$ is the unit of $G$.

\begin{definition}
Let $\Gamma$ be a conditional expectation from a unital $C^*$-algebra $B$ onto its unital $C^*$-subalgebra $A$. A finite family $\{(u_1,u_1^*), (u_2,u_2^*),\cdots, (u_n,u_n^*)\}\subseteq B\times B$ is called a quasi-basis for $\Gamma$ if for all $a\in B$,
 \begin{eqnarray*}
    \begin{array}{c}
\sum\limits_{i=1}^{n}u_i\Gamma(u_i^*a)=a=\sum\limits_{i=1}^{n}\Gamma(a u_i)u_i^*.
    \end{array}
\end{eqnarray*}
Furthermore, if there exists a quasi-basis for $\Gamma$, we call $\Gamma$ of index-finite type. In this case we define the index of $\Gamma$ by
 \begin{eqnarray*}
    \begin{array}{c}
{\mathrm{Index}}\ \Gamma=\sum\limits_{i=1}^nu_iu_i^*.
      \end{array}
\end{eqnarray*}
\end{definition}

\begin{remark}
(1) If $\Gamma$ is a conditional expectation of index-finite type, then ${\mathrm{Index}}\ \Gamma$ is in the center of $A$ and does not depend on the choice of quasi-basis (\cite{Y.Wat}).

(2) Let $N \subseteq M$ be factors of type II$_1$ and $\Gamma: M\rightarrow N$ the canonical conditional expectation determined by the unique normalized trace on $M$, then ${\mathrm{Index}}\ \Gamma$ is exactly Jones index $[M,N]$ based on the coupling constant (\cite{M.PP}). More generally, let $M$ be a ($\sigma$-finite) factor with a
subfactor $N$ and $\Gamma$ a normal conditional expectation from $M$
onto $N$, then $\Gamma$ is of index-finite if and only if ${\mathrm{Index}}\ \Gamma$ is finite in the sence of \cite{H.Ko}, and the values of ${\mathrm{Index}}\ \Gamma$ are equal.
\end{remark}

\begin{theorem}
For fixed $k\in\Bbb{Z}, x\in G, y\in H$, set
$$u_{x,y}=\sqrt{|G|}\delta_x(k)\rho_y(k+\frac{1}{2}).$$
Then $\{(u_{x,y},u_{x,y}^*):x\in G,y\in H\}$ is a quasi-basis of $z_{_H}:{\mathcal{F}}_{_H}\rightarrow{\mathcal{A}}_{_{(H,G)}}$.
\end{theorem}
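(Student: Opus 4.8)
The plan is to verify the two defining identities of a quasi-basis directly, namely that for every $F\in\mathcal{F}_{_H}$
\[
\sum_{x\in G,\,y\in H}u_{x,y}\,z_{_H}(u_{x,y}^{*}F)=F=\sum_{x\in G,\,y\in H}z_{_H}(Fu_{x,y})\,u_{x,y}^{*},
\]
and then to read off $\mathrm{Index}\,z_{_H}=\sum_{x,y}u_{x,y}u_{x,y}^{*}$. Each map $F\mapsto u_{x,y}\,z_{_H}(u_{x,y}^{*}F)$ is bounded, being a composition of left/right multiplications with the norm-one projection $z_{_H}$, and the family is finite; so by continuity it suffices to check the first identity on the dense subalgebra $\bigcup_{n}\mathcal{F}_{_H}(\Lambda_{n})$, hence on monomials in the generators $\delta_{g}(x),\rho_{h}(l)$. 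Enlarging $\Lambda$ if necessary, I may assume such a monomial $F$ of length $N$ is supported on a finite interval containing the two sites $k$ and $k+\frac{1}{2}$.

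First I would record $u_{x,y}^{*}=\sqrt{|G|}\,\rho_{y^{-1}}(k+\frac{1}{2})\delta_{x}(k)=\sqrt{|G|}\,\delta_{x}(k)\rho_{y^{-1}}(k+\frac{1}{2})$ (the two generators commute since $k+\frac{1}{2}>k$), together with the explicit expectation $z_{_H}(\,\cdot\,)=\frac{1}{|G|}\sum_{g\in G}(e,g)(\,\cdot\,)$. The engine of the computation is the module-algebra property combined with the telescoping coproduct $\bigtriangleup^{(N-1)}(e,g)=\sum_{s_{1},\dots,s_{N-1}\in H}(s_{1},g)\otimes(s_{1}^{-1}s_{2},g)\otimes\cdots\otimes(s_{N-1}^{-1},g)$, whose $H$-labels multiply to $e$, distributing $(e,g)$ across $u_{x,y}^{*}F$. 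The leading factor $\delta_{x}(k)$ forces the first $H$-label to equal $e$, because $(h,g)\delta_{f}(x)=\delta_{h,e}\delta_{gf}(x)$, and sends $\delta_{x}(k)\mapsto\delta_{gx}(k)$; meanwhile $\rho_{y^{-1}}(k+\frac{1}{2})$ imposes, via $(h,g)\rho_{t}(l)=\delta_{h,gtg^{-1}}\rho_{h}(l)$, the constraint that pins $y$ against the $H$-content of $F$ at the edge. This is exactly the pinning phenomenon already visible in the computations of Lemma 2.2.

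Next I would left-multiply the result by $u_{x,y}=\sqrt{|G|}\,\delta_{x}(k)\rho_{y}(k+\frac{1}{2})$ and sum over $x\in G,\,y\in H$. The factor $\delta_{x}(k)$ meets the averaged sum $\sum_{g}\delta_{gx}(k)$ and, by $\delta_{x}(k)\delta_{g'}(k)=\delta_{x,g'}\delta_{x}(k)$, selects a single term; this is precisely what \emph{undoes} the $G$-averaging of $z_{_H}$ at site $k$, and then $\sum_{x}\delta_{x}(k)=I$ restores the full resolution of the identity there, while $\rho_{y}\rho_{y^{-1}}=\rho_{e}=I$ together with the $y$-summation restore the $\rho$-content at the edge. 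The two normalizations combine as $\sqrt{|G|}\cdot\sqrt{|G|}=|G|$ and cancel the $\frac{1}{|G|}$ in $z_{_H}$, leaving exactly $F$. The second identity then follows for free: applying the first to $F^{*}$ and taking adjoints gives $\sum_{x,y}z_{_H}((u_{x,y}^{*}F^{*})^{*})u_{x,y}^{*}=F$, i.e. $\sum_{x,y}z_{_H}(Fu_{x,y})u_{x,y}^{*}=F$, where I use that $z_{_H}$ is $*$-preserving ($z_{_H}(F^{*})=z_{_H}(F)^{*}$, which one checks from $a(F^{*})=(S(a^{*})(F))^{*}$ and the identity $S((e,g)^{*})=(e,g)$). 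Finally, since $\rho_{y}(k+\frac{1}{2})\rho_{y^{-1}}(k+\frac{1}{2})=I$,
\[
\mathrm{Index}\,z_{_H}=\sum_{x,y}u_{x,y}u_{x,y}^{*}=|G|\sum_{y\in H}\sum_{x\in G}\delta_{x}(k)=|G||H|\,I,
\]
which is central, as it must be.

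The main obstacle is the bookkeeping in the middle step for a general monomial: one must follow the telescoping chain of $H$-labels $s_{i}$ produced by $\bigtriangleup^{(N-1)}(e,g)$ as it threads through the interleaved $\delta$ and $\rho$ factors of $F$, and verify that the conjugation twists acquired when $\rho_{y^{-1}}(k+\frac{1}{2})$ is commuted past the generators to its right (governed by $\rho_{h}(l)\delta_{g}(x)=\delta_{hg}(x)\rho_{h}(l)$ for $l<x$ and by the braiding relation $\rho_{h_{1}}(l)\rho_{h_{2}}(l')=\rho_{h_{2}}(l')\rho_{h_{2}^{-1}h_{1}h_{2}}(l)$) are cancelled exactly after summation over $y\in H$ and left multiplication by $u_{x,y}$. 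Organizing this cleanly---most naturally by induction on $N$, or by first reducing every monomial to the canonical ordered form employed in Lemma 2.2---is the only genuinely delicate point; the remainder is the pinning-and-normalization bookkeeping sketched above.
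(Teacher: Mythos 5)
Your overall strategy coincides with the paper's: reduce by linearity, boundedness of the maps $F\mapsto u_{x,y}z_{_H}(u_{x,y}^*F)$ and density to local monomials, verify the quasi-basis identity there, and pass to the $C^*$-inductive limit. Two of your reductions are sound and even slightly cleaner than the paper's: deducing the second identity $\sum_{x,y}z_{_H}(Fu_{x,y})u_{x,y}^*=F$ from the first by applying it to $F^*$ and taking adjoints is legitimate, since $z_{_H}$ is indeed $*$-preserving (either from positivity, or from $S((e,g)^*)=(e,g)$ as you note), whereas the paper only says ``similarly, one can verify.''

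However, there is a genuine gap: the identity $\sum_{x\in G}\sum_{y\in H}u_{x,y}z_{_H}(u_{x,y}^*F)=F$ on monomials, which is essentially the entire mathematical content of the theorem, is never actually verified. You describe the mechanism qualitatively (``pinning,'' the twists ``are cancelled exactly after summation over $y\in H$'') and then explicitly defer it as ``the only genuinely delicate point''; but that cancellation is precisely what a proof must establish, and it is what the paper's multi-line computation does. Moreover, the one place where you commit to a specific mechanism is not quite right: for the canonical monomial $F=\delta_{g_1}(1)\cdots\delta_{g_m}(m)\varrho_{h_1}(\frac{1}{2})\cdots\varrho_{h_m}(m-\frac{1}{2})$ (with $k=1$), the sum over $y\in H$ collapses to the single value $y=h_1h_2\cdots h_m$, i.e.\ $y$ is pinned against the product of \emph{all} the $\varrho$-labels of $F$, not against ``the $H$-content of $F$ at the edge.'' The reason is that $z_{_H}$ annihilates every monomial whose $\varrho$-labels do not multiply to $e$ (the telescoping coproduct of $(e,g)$ forces the product of the constraint values to be $e$; this is also why Lemma 2.2 assumes $h_1\cdots h_m=e$). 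After this pinning one still needs the explicit averaging formula for $z_{_H}$ on the surviving monomial (the sum over $s\in G$ of translated $\delta$'s and conjugated $\varrho$'s computed in Lemma 2.2), whereupon the outer factor $\delta_x(1)$ with $x=g_1$ pins $s=g_1$ and reassembles $F$. Without carrying out this computation, or at least setting up the induction on intervals that you mention, the proposal is a correct plan rather than a proof.
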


\begin{proof}
Without loss of generality, one can consider the case $k=1$.

Firstly, one can show that
$\{(u_{x,y},u_{x,y}^*):x\in G,y\in H\}$ is a quasi-basis of $z_{_H}:{\mathcal{F}}_{_H}(\Lambda_{{\frac{1}{2}},m})
\rightarrow{\mathcal{A}}_{_H}(\Lambda_{1,m-\frac{1}{2}})$, for any $m\in\Bbb{Z}$ and $m>1$.

Note that
 \begin{eqnarray*}
    \begin{array}{rcl}
&&\sum\limits_{x\in G}\sum\limits_{y\in H}
u_{x,y}z_{_H}\Big[u_{x,y}^*\delta_{g_1}(1)\delta_{g_2}(2)
\cdots\delta_{g_m}(m)
\varrho_{h_1}(\frac{1}{2})\varrho_{h_2}(\frac{3}{2})
\cdots\varrho_{h_m}(m-\frac{1}{2})\Big]\\
&=&|G|\sum\limits_{x\in G}\sum\limits_{y\in H}u_{x,y}
z_{_H}\Big[\delta_x(1)\varrho_{y^{-1}}(\frac{3}{2})
\delta_{g_1}(1)\delta_{g_2}(2)\cdots\delta_{g_m}(m)
\varrho_{h_1}(\frac{1}{2})\varrho_{h_2}(\frac{3}{2})
\cdots\varrho_{h_m}(m-\frac{1}{2})\Big]\\
&=&|G|\sum\limits_{x\in G}\sum\limits_{y\in H}u_{x,y}
z_{_H}\Big[\delta_x(1)\delta_{g_1}(1)\delta_{y^{-1}g_2}(2)
\delta_{y^{-1}g_3}(3)\cdots\delta_{y^{-1}g_m}(m)
\varrho_{y^{-1}}(\frac{3}{2})
\varrho_{h_1}(\frac{1}{2})\varrho_{h_2}(\frac{3}{2})\\
&&\cdots\varrho_{h_m}(m-\frac{1}{2})\Big]\\
&=&|G|\sum\limits_{x\in G}\sum\limits_{y\in H}u_{x,y}
z_{_H}\Big[\delta_x(1)\delta_{g_1}(1)\delta_{y^{-1}g_2}(2)
\delta_{y^{-1}g_3}(3)\cdots\delta_{y^{-1}g_m}(m)
\varrho_{h_1}(\frac{1}{2})
\varrho_{h_1^{-1}y^{-1}h_1}(\frac{3}{2})\\
&&\varrho_{h_2}(\frac{3}{2})
\cdots\varrho_{h_m}(m-\frac{1}{2})\Big]\\
&=&|G|\sum\limits_{x\in G}\sum\limits_{y\in H}
\delta_x(1)\varrho_{y}(\frac{3}{2})
z_{_H}\Big[\delta_x(1)\delta_{g_1}(1)\delta_{y^{-1}g_2}(2)
\delta_{y^{-1}g_3}(3)\cdots\delta_{y^{-1}g_m}(m)
\varrho_{h_1}(\frac{1}{2})
\varrho_{h_1^{-1}y^{-1}h_1 h_2}(\frac{3}{2})\\
&&\varrho_{h_3}(\frac{5}{2})
\cdots\varrho_{h_m}(m-\frac{1}{2})\Big]\\
&=&|G|\delta_{g_1}(1)\varrho_{h_1h_2\cdots h_m}(\frac{3}{2})
z_{_H}\Big[\delta_{g_1}(1)\delta_{h_m^{-1}h_{m-1}^{-1}\cdots h_1^{-1}g_2}(2)
\delta_{h_m^{-1}h_{m-1}^{-1}\cdots h_1^{-1}g_3}(3)\cdots\delta_{h_m^{-1}h_{m-1}^{-1}\cdots h_1^{-1}g_m}(m)\\
&&\varrho_{h_1}(\frac{1}{2})
\varrho_{h_1^{-1}h_m^{-1}h_{m-1}^{-1}\cdots h_4^{-1}h_3^{-1}}(\frac{3}{2})\varrho_{h_3}(\frac{5}{2})
\cdots\varrho_{h_m}(m-\frac{1}{2})\Big]\\
&=&\delta_{g_1}(1)\varrho_{h_1h_2\cdots h_m}(\frac{3}{2})
\Big[\sum\limits_{s\in G}
\delta_s(1)\delta_{sg_1^{-1}h_m^{-1}h_{m-1}^{-1}\cdots h_1^{-1}g_2}(2)
\delta_{sg_1^{-1}h_m^{-1}h_{m-1}^{-1}\cdots h_1^{-1}g_3}(3)\cdots
\delta_{sg_1^{-1}h_m^{-1}h_{m-1}^{-1}\cdots h_1^{-1}g_m}(m)\\
&&\varrho_{sg_1^{-1}h_1g_1s^{-1}}(\frac{1}{2})
\varrho_{sg_1^{-1}h_1^{-1}h_m^{-1}h_{m-1}^{-1}\cdots h_4^{-1}h_3^{-1} g_1s^{-1}}(\frac{3}{2})\varrho_{sg_1^{-1}h_3g_1s^{-1}}(\frac{5}{2})
\cdots\varrho_{sg_1^{-1}h_mg_1s^{-1}}(m-\frac{1}{2})\Big]\\
&=&\delta_{g_1}(1)\varrho_{h_1h_2\cdots h_m}(\frac{3}{2})
\delta_{h_m^{-1}h_{m-1}^{-1}\cdots h_1^{-1}g_2}(2)
\delta_{h_m^{-1}h_{m-1}^{-1}\cdots h_1^{-1}g_3}(3)\cdots
\delta_{h_m^{-1}h_{m-1}^{-1}\cdots h_1^{-1}g_m}(m)\\
&&\varrho_{h_1}(\frac{1}{2})
\varrho_{h_1^{-1}h_m^{-1}h_{m-1}^{-1}\cdots h_4^{-1}h_3^{-1}}(\frac{3}{2})
\varrho_{h_3}(\frac{5}{2})
\cdots\varrho_{h_m}(m-\frac{1}{2})\\
&=&\delta_{g_1}(1)\delta_{g_2}(2)\cdots\delta_{g_m}(m)
\varrho_{h_1}(\frac{1}{2})
\varrho_{h_1^{-1}h_1h_2\cdots h_mh_1}(\frac{3}{2})
\varrho_{h_1^{-1}h_m^{-1}h_{m-1}^{-1}\cdots h_4^{-1}h_3^{-1}}(\frac{3}{2})
\varrho_{h_3}(\frac{5}{2})
\cdots\varrho_{h_m}(m-\frac{1}{2})\\
&=&\delta_{g_1}(1)\delta_{g_2}(2)\cdots\delta_{g_m}(m)
\varrho_{h_1}(\frac{1}{2})
\varrho_{h_2}(\frac{3}{2})
\varrho_{h_3}(\frac{5}{2})
\cdots\varrho_{h_m}(m-\frac{1}{2})
      \end{array}
\end{eqnarray*}
which yields that for any $a\in {\mathcal{F}}_{_H}(\Lambda_{{\frac{1}{2}},m})$,
 \begin{eqnarray*}
    \begin{array}{c}
    \sum\limits_{x\in G}\sum\limits_{y\in H}
    u_{x,y}z_{_H}(u_{x,y}^*a)=a.
      \end{array}
\end{eqnarray*}
Similarly, one can verify
 \begin{eqnarray*}
    \begin{array}{c}
    \sum\limits_{x\in G}\sum\limits_{y\in H}z_{_H}(a u_{x,y})u_{x,y}^*=a, \ \ \forall a\in {\mathcal{F}}_{_H}(\Lambda_{{\frac{1}{2}},m}).
      \end{array}
\end{eqnarray*}

By induction, we can show that $\{(u_{x,y},u_{x,y}^*):x\in G,y\in H\}$ is a quasi-basis of $z_{_H}:{\mathcal{F}}_{_H}(\Lambda_{{n-\frac{1}{2}},m})
\rightarrow{\mathcal{A}}_{_H}(\Lambda_{n,m-\frac{1}{2}})$, for any $n,m\in\Bbb{Z}$ and $n<m$.

Since $z_{_H}$ is a projection of norm one, $z_{_H}$
can therefore be extended to the map of $\overline{\bigcup\limits_{n<m}
{\mathcal{F}}_{_H}(\Lambda_{{n-\frac{1}{2}},m})}$ onto $\overline{\bigcup\limits_{n<m}{\mathcal{A}}_{_H}
(\Lambda_{n,m-\frac{1}{2}})}$ by continuity, and then $\{(u_{x,y},u_{x,y}^*):x\in G,y\in H\}$ is a quasi-basis of $z_{_H}:\overline{\bigcup\limits_{n<m}
{\mathcal{F}}_{_H}(\Lambda_{{n-\frac{1}{2}},m})}
\rightarrow\overline{\bigcup\limits_{n<m}{\mathcal{A}}_{_H}
(\Lambda_{n,m-\frac{1}{2}})}$.

Finally, the uniqueness of the $C^*$-inductive limit (\cite{B.R.Li}) implies that ${\mathcal{F}}_{_H}=\overline{\bigcup\limits_{n<m}
{\mathcal{F}}_{_H}(\Lambda_{{n-\frac{1}{2}},m})}$
and ${\mathcal{A}}_{_{(H,G)}}=
\overline{\bigcup\limits_{n<m}{\mathcal{A}}_{_H}
(\Lambda_{n,m-\frac{1}{2}})}$.
As a result, $\{(u_{x,y},u_{x,y}^*):x\in G,y\in H\}$ is a quasi-basis of $z_{_H}:{\mathcal{F}}_{_H}\rightarrow{\mathcal{A}}_{_{(H,G)}}$.
\end{proof}

From Theorem 3.1, we know $z_{_H}$ is a conditional expectation of index-finite type, which can guarantee that $z_{_H}$ is non-degenerate.

\begin{remark}
(1) For $k,l\in\Bbb{Z}$ with $k\leq l$, $x\in G,y\in H$, set
$$w_{x,y}=\sqrt{|G|}\delta_x(k)\rho_y(l+\frac{1}{2}).$$
One can verify that $\{(w_{x,y},w_{x,y}^*):x\in G,y\in H\}$ is a quasi-basis of $z_{_H}:{\mathcal{F}}_{_H}\rightarrow{\mathcal{A}}_{_{(H,G)}}$.

(2) Let $k,l\in\Bbb{Z}$ with $k>l$, $x\in G,y\in H$, put
$$v_{x,y}=\sqrt{|G|}\delta_x(k)\rho_y(l+\frac{1}{2}),$$
then $\{(v_{x,y},v_{x,y}^*):x\in G,y\in H\}$ is not a quasi-basis of $z_{_H}:{\mathcal{F}}_{_H}\rightarrow{\mathcal{A}}_{_{(H,G)}}$. In fact, one can show that $\{(\nu_{x,y},\nu_{x,y}^*):x\in G,y\in H\}$ is not a quasi-basis of $z_{_H}:{\mathcal{F}}_{_H}(\Lambda_{{\frac{1}{2}},2})
\rightarrow{\mathcal{A}}_{_H}(\Lambda_{1,\frac{3}{2}})$,
where $\nu_{x,y}=\sqrt{|G|}\delta_x(1)\rho_y(\frac{1}{2})$.

Notice that
 \begin{eqnarray*}
    \begin{array}{rcl}
&&\sum\limits_{x\in G}\sum\limits_{y\in H}
\nu_{x,y}z_{_H}\Big[\nu_{x,y}^*\delta_{g_1}(1)\delta_{g_2}(2)
\varrho_{h_1}(\frac{1}{2})\varrho_{h_2}(\frac{3}{2})\Big]\\
&=&|G|\sum\limits_{x\in G}\sum\limits_{y\in H}\nu_{x,y}
z_{_H}\Big[\delta_x(1)\varrho_{y^{-1}}(\frac{1}{2})
\delta_{g_1}(1)\delta_{g_2}(2)
\varrho_{h_1}(\frac{1}{2})\varrho_{h_2}(\frac{3}{2})\Big]\\
&=&|G|\sum\limits_{x\in G}\sum\limits_{y\in H}\nu_{x,y}
z_{_H}\Big[\delta_x(1)\delta_{y^{-1}g_1}(1)\delta_{y^{-1}g_2}(2)
\varrho_{y^{-1}}(\frac{1}{2})
\varrho_{h_1}(\frac{1}{2})\varrho_{h_2}(\frac{3}{2})\Big]\\
&=&|G|\sum\limits_{x\in G}\sum\limits_{y\in H}\nu_{x,y}
z_{_H}\Big[\delta_x(1)\delta_{y^{-1}g_1}(1)\delta_{y^{-1}g_2}(2)
\varrho_{y^{-1}h_1}(\frac{1}{2})\varrho_{h_2}(\frac{3}{2})\Big]\\
&=&|G|\delta_{h_2^{-1}h_1^{-1}g_1}(1)\varrho_{h_1h_2}(\frac{1}{2})
z_{_H}\Big[\delta_{h_2^{-1}h_1^{-1}g_1}(1)\delta_{h_2^{-1}h_1^{-1}g_2}(2)
\varrho_{h_2^{-1}}(\frac{1}{2})
\varrho_{h_2}(\frac{3}{2})\Big]\\
&=&\delta_{h_2^{-1}h_1^{-1}g_1}(1)\varrho_{h_1h_2}(\frac{1}{2})
\Big[\sum\limits_{s\in G}
\delta_s(1)\delta_{sg_1^{-1}g_2}(2)
\varrho_{sg_1^{-1}h_1h_2^{-1}h_1^{-1}g_1s^{-1}}(\frac{1}{2})
\varrho_{sg_1^{-1}h_1h_2h_1^{-1}g_1s^{-1}}
(\frac{3}{2})\Big]\\
&=&\sum\limits_{s\in G}
\delta_{h_2^{-1}h_1^{-1}g_1}(1)\delta_{h_1h_2s}(1)
\delta_{h_1h_2sg_1^{-1}g_2}(2)
\varrho_{h_1h_2sg_1^{-1}h_1h_2^{-1}h_1^{-1}g_1s^{-1}}(\frac{1}{2})
\varrho_{sg_1^{-1}h_1h_2h_1^{-1}g_1s^{-1}}
(\frac{3}{2})\\
&=&\delta_{h_2^{-1}h_1^{-1}g_1}(1)\delta_{h_2^{-1}h_1^{-1}g_2}(2)
\varrho_{h_2^{-1}h_1h_2}(\frac{1}{2})
\varrho_{h_2^{-1}h_1^{-1}h_2h_1h_2}(\frac{3}{2}),
 \end{array}
  \end{eqnarray*}
which can implies that for some $a\in {\mathcal{F}}_{_H}(\Lambda_{{\frac{1}{2}},2})$, we have
\begin{eqnarray*}
    \begin{array}{c}
    \sum\limits_{x\in G}\sum\limits_{y\in H}
    \nu_{x,y}z_{_H}(\nu_{x,y}^*a)\neq a.
      \end{array}
\end{eqnarray*}
\end{remark}

\begin{theorem}
The $C^*$-index of $z_{_H}:{\mathcal{F}}_{_H}\rightarrow{\mathcal{A}}_{_{(H,G)}}$ is $|G||H|I$.
\end{theorem}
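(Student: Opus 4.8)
The plan is to extract the index directly from the explicit quasi-basis of Theorem 3.1 together with Definition 3.1. Once $\{(u_{x,y},u_{x,y}^*):x\in G,\,y\in H\}$ has been shown to be a quasi-basis for $z_{_H}$, Definition 3.1 gives $\mathrm{Index}\,z_{_H}=\sum_{x\in G}\sum_{y\in H}u_{x,y}u_{x,y}^*$, and by Remark 3.1(1) this element lies in the center of ${\mathcal{A}}_{_{(H,G)}}$ and is independent of the chosen quasi-basis. Thus it suffices to evaluate this single sum for $u_{x,y}=\sqrt{|G|}\,\delta_x(k)\rho_y(k+\tfrac12)$.

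First I would compute $u_{x,y}^*$. Because the $*$-operation satisfies $\delta_x^*(k)=\delta_x(k)$ and $\rho_y^*(k+\tfrac12)=\rho_{y^{-1}}(k+\tfrac12)$ and is antimultiplicative, we obtain $u_{x,y}^*=\sqrt{|G|}\,\rho_{y^{-1}}(k+\tfrac12)\,\delta_x(k)$, hence
\[
u_{x,y}u_{x,y}^*=|G|\,\delta_x(k)\,\rho_y(k+\tfrac12)\,\rho_{y^{-1}}(k+\tfrac12)\,\delta_x(k).
\]
Next I would collapse the product using the defining relations of ${\mathcal{F}}_{_{H,loc}}$: from $\rho_{h_1}(l)\rho_{h_2}(l)=\rho_{h_1h_2}(l)$ we get $\rho_y(k+\tfrac12)\rho_{y^{-1}}(k+\tfrac12)=\rho_e(k+\tfrac12)=I$, and from $\delta_{g_1}(x)\delta_{g_2}(x)=\delta_{g_1,g_2}\delta_{g_1}(x)$ we get $\delta_x(k)\delta_x(k)=\delta_x(k)$. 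Therefore $u_{x,y}u_{x,y}^*=|G|\,\delta_x(k)$, which is independent of $y$.

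Finally I would carry out the summation. Since the summand does not depend on $y\in H$, the inner sum contributes a factor $|H|$, and the relation $\sum_{x\in G}\delta_x(k)=I$ yields
\[
\mathrm{Index}\,z_{_H}=\sum_{x\in G}\sum_{y\in H}|G|\,\delta_x(k)=|G||H|\sum_{x\in G}\delta_x(k)=|G||H|\,I.
\]
I expect no genuine obstacle here beyond this bookkeeping: all the substantive work was already carried out in verifying the quasi-basis property in Theorem 3.1, and the present statement is a direct evaluation of the defining sum via the three local relations above. As a consistency check one could instead use the alternative quasi-basis $\{(w_{x,y},w_{x,y}^*)\}$ of Remark 3.2(1); by Remark 3.1(1) the same value $|G||H|I$ must result.
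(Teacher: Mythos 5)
Your proof is correct and takes essentially the same approach as the paper: both evaluate $\mathrm{Index}\ z_{_H}=\sum_{x\in G}\sum_{y\in H}u_{x,y}u_{x,y}^*$ for the quasi-basis of Theorem 3.1, cite independence of the index from the choice of quasi-basis, and collapse the product using the local relations $\rho_y\rho_{y^{-1}}=\rho_e=I$, $\delta_x\delta_x=\delta_x$, and $\sum_{x\in G}\delta_x=I$. The only cosmetic differences are that the paper fixes $k=1$ and commutes $\rho_y(\frac{3}{2})$ past $\delta_x(1)$ before cancelling, whereas you keep $k$ general and let the adjacent $\rho$ factors cancel immediately after taking the adjoint antimultiplicatively.
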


\begin{proof}
Since ${\mathrm{Index}}\ z_{_H}$ does not depend on the choice of quasi-basis, then
 \begin{eqnarray*}
    \begin{array}{rcl}
{\mathrm{Index}}\ z_{_H}
&=&\sum\limits_{x\in G}\sum\limits_{y\in H}u_{x,y}u_{x,y}^*\\
&=&|G|\sum\limits_{x\in G}\sum\limits_{y\in H}
\delta_x(1)\rho_y(\frac{3}{2})\delta_x(1)\rho_y^{-1}(\frac{3}{2})\\
&=&|G|\sum\limits_{x\in G}\sum\limits_{y\in H}
\delta_x(1)\delta_x(1)\rho_y(\frac{3}{2})\rho_y^{-1}(\frac{3}{2})\\
&=&|G|\sum\limits_{x\in G}\sum\limits_{y\in H}
\delta_x(1)\rho_e(\frac{3}{2})\\
&=&|G||H|I.
      \end{array}
\end{eqnarray*}
\end{proof}

\begin{remark}
In particular, if $H=G$, then $z_{_G}=\frac{1}{|G|}\sum\limits_{g\in G}(e,g): {\mathcal{F}}\rightarrow{\mathcal{A}}_{_{(G,G)}}$ is a conditional expectation of index-finite type, and ${\mathrm{Index}}\ z_{_G}=|G|^2I$.
\end{remark}


\begin{thebibliography}{}

\bibitem{S.Dop} Doplicher, S., Roberts, J. Fields, statistics and non-abelian gauge group. Comm. Math. Phys., 28: 331-348 (1972)

\bibitem{V.F.R.J} Jones, V.F.R. Subfactors and Knots, CBMS, No. 80, American Mathematical Society Providence, Rhode Island, 1991.

\bibitem{K.Szl} Szlach\'{a}nyi, K., Vecsernyes, P. Quantum symmetry and braided group statistics in $G$-spin models. Comm. Math. Phys., 156: 127-168 (1993)
\bibitem{K.A.Da} Dancer, K.A., Isaac, P.S., Links, J. Representations of the quantum doubles of finite group algebras and spectral parameter dependent solutions of the Yang-Baxter equations. J. Math. Phys., 47: 103511 (2006)
\bibitem{G.Mas} Mason, G. The quantum double of a finite group and its role in conformal field theory, London Mathematical Society Lecture Notes, 212, 405-417, Cambridge Univ. Press, Cambridge, 1995.
\bibitem{P.Ban} B\'{a}ntay, P. Orbifolds and Hopf algebras. Phys. Lett. B., 245: 477-479 (1990)
\bibitem{C.Kas} Kassel, C. Quantum groups, Springer, New York, 1995, GTM 155.
\bibitem{F.Nil} Nill, F., Szlach\'{a}nyi, K. Quantum chains of Hopf algebras with quantum double cosymmetry. Comm. Math. Phys., 187: 159-200 (1997)
\bibitem{L.N.Ji} Jiang, L.N. $C^*$-index of observable algebras in $G$-spin models. Science in China. Ser.A Mathematics, 48: 57-66 (2005)
\bibitem{Qiao} Xin, Q.L., Jiang, L.N. Symmetric structure of field algebra of $G$-spin models determined by a normal subgroup. J. Math. Phys., 55: 091703 (2014)
\bibitem{M.E.Sw} Sweedler, M.E. Hopf algebras, W.A. Benjamin, New York, 1969.
\bibitem{E.Abe} Abe, E. Hopf Algebras, Cambridge Tracts in Mathematics, No. 74, Cambridge Univ. Press, New York, 1980
\bibitem{L.N.Jia} Jiang, L.N. Towards a quantum Galois theory for quantum double algebras of finite groups. Proc. Amer. Math. Soc., 138: 2793-2801 (2010)
\bibitem{B.R.Li} Li, B.R. Operator Algebras, Scientific Press, Beijing, 1998 (in Chinese).
\bibitem{Y.Wat} Watatani, Y. Index for $C^*$-subalgebras, Mem. Am. Math. Soc. No. 424, 1990.
\bibitem{M.PP} Pimsner, M., Popa, S. Entropy and index for subfactors, Ann. Sci. Ecole. Norm. Sup., 19: 57-106 (1986)
\bibitem{H.Ko} Kosaki, H. Extension of Jones' theory on index to arbitrary factors. J. Func. Anal., 66: 123-140 (1986)




 \end{thebibliography}
 \end{document}